\documentclass{article}
\usepackage{CJKutf8}

\newtheorem{fed}{\textbf{Definition}}[section]
\newtheorem{thm}[fed]{\textbf{Theorem}}
\newtheorem{assumption}[fed]{\textbf{Assumption}}

\newtheorem{lemma}[fed]{\textbf{Lemma}}

\newtheorem{prop}[fed]{\textbf{Proposition}}

\usepackage{amscd,amssymb,bbm,graphicx,epsfig,psfrag,epic,eepic,latexsym,amsmath}
\usepackage{amsmath}
\usepackage[arrow,matrix,curve]{xy}
\usepackage{mathrsfs}
\usepackage[normalem]{ulem}
\usepackage{soul}
\usepackage
{color}

\newcommand{\R}{\mathbb{R}}

\newcommand{\e}{\mathbf{e}}

\begin{document}

\title{Exclusion of Infinite Spin for N-body problem in $\R^{d}$}
\author{Xiang Yu, Lei Zhao}

\maketitle

\begin{abstract}
We show that there is no infinite spin at total collisions for $(-\kappa)$-homogeneous N-body problem in higher dimensional Euclidean space $\R^{d}$, in which $0<\kappa<2$ ($\kappa=1$ the Newtonian case), provided the limiting normalized central configuration is isolated and is of dimension $d$ or $d-1$. In the Newtonian case $\kappa=1$, this extends the work of \cite{MM} to $d \ge 3$ and in the $d=3$ case offers a different approach as compared to the current preprint \cite{PZ}.
\end{abstract}

\tableofcontents

\section{Introduction}
In the study of the Newtonian N-body problem, it is now well-known that when a total collision happens, the configurations formed by the particles after proper normalization tend to the set of central configurations. The rotational symmetry of the problem makes it \emph{a priori} possible for a total collision orbit to rotate infinitely many times before ending up colliding each other. 

When the limiting central configuration is non-degenerate, this is shown by Chazy \cite{Chazy} and Saari \cite{Saari} to be impossible. In the planar N-body problem, this problem was recently studied by \cite{Yu} and by Moeckel-Mongomery \cite{MM} who showed that infinite spin does not happen if the limiting central configuration is isolated. Several extensions has been obtained \cite{GSZ, WY}. 
Very recently, this is extended to the spatial N-body problem by Pinzari-Zgliczynski \cite{PZ} by means of constructing canonical variables in the Hamiltonian formalism to perform reduction by the $SO(3)$-symmetry of the problem. 

In this article, we extend the approach of Moeckel-Mongomery  \cite{MM} to deal with the non-abelian rotational symmetry $SO(d)$-symmetry of N-body problem in $\R^{d}$. Two steps need to be carried out, namely reducing the symmetry of the problem to get a reduced equations of motion, and use these reduced equations of motion together with a Lojasiewicz-type argument established in \cite{MM} to conclude that along a total collision orbit the projection to the rotational part of the system is finite.  In contrast to \cite{PZ}, we perform non-abelian Lagrange-Routh reduction at $0$- angular momentum level. 

Remind that central configurations form dilation-invariant, rotation-invariant families. A central configuration is called normalized if the moment of inertia with respect to the center of mass is $1$. A central configuration is called reduced if it represents an equivalent class of central configurations obtained by rotating a fixed central configuration. To better illustrate the universality of the method of \cite{MM} we state our results for N-body problem in $\R^{d}$ with $(-\kappa)$-homogeneous potential for $0<\kappa<2$. 

\begin{thm}\label{thm: main} In the $(-\kappa)$-homogeneous $N$-body system in $\R^{d}$ ($0<\kappa<2$),  along any solution leading to a total collision, if the corresponding limiting reduced normalized central configuration $s_{0}$  is of dimension $d$ or $d-1$, and is isolated, then there is no infinite spin: Without reducing the rotational symmetry, the normalized configuration tends to a fixed limiting normalized central configuration. 
\end{thm}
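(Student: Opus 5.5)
We follow the strategy of \cite{MM}, replacing their abelian $S^1$-reduction by a non-abelian Lagrange--Routh reduction for $SO(d)$ at zero angular momentum. First we put the motion into McGehee-type coordinates. Write the centered configuration as $q=r s$ with $r=\sqrt{I}$ and $|s|=1$. We rescale time by $dt=r^{1+\kappa/2}d\tau$ and use the velocity variable $v=r^{\kappa/2}\dot r$. The total collision is then blown up to a collision manifold $\{r=0\}$, and the flow extends to it. Along the collision orbit, $r\to0$ as $\tau\to+\infty$, and the standard Sundman--type estimates give $\int^\infty r^{\,\alpha}d\tau<\infty$ for suitable $\alpha>0$; here we use $0<\kappa<2$. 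A total collision forces the angular momentum to vanish, so we work on the level $J=0$.

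Next we reduce the symmetry. Near $s_0$ we fix a local slice $\Sigma$ for the $SO(d)$-action on the normalized sphere and write $s=g\cdot x$ with $g\in SO(d)$ and $x\in\Sigma$. The hypothesis $\dim s_0\in\{d,d-1\}$ is used exactly here. If the configuration spans an affine subspace of dimension at least $d-1$, its stabilizer in $SO(d)$ is trivial. (For dimension $d-1$, any element fixing a hyperplane pointwise and lying in $SO(d)$ is the identity.) So the action is locally free near $s_0$, and $\Sigma$ is a genuine local chart of the shape sphere $S/SO(d)$. We choose a mechanical connection $\mathcal A$, given by the locked inertia tensor $\mathbb I(x)$, which is invertible by local freeness. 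The condition $J=0$ then reads $g^{-1}\dot g=-\mathcal A(x)\dot x$: it is a horizontality condition. The reduced (Routhian) equations on $T\Sigma\times\{r,v\}$ are then an ordinary system with no $g$-dependence. In the blown-up time they take the McGehee form, a gradient-like flow for the reduced normalized potential $U|_\Sigma$ plus terms of order $r^{\alpha}$.

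In the reduced system we apply the Łojasiewicz argument of \cite{MM}. Since $s_0$ is isolated as a reduced central configuration, it is an isolated critical point of the real-analytic function $U$ on $\Sigma$. The $\omega$-limit of the reduced orbit is then a rest point $(x_0,v_0)$ with $x_0$ corresponding to $s_0$. The Łojasiewicz gradient inequality $|U(x)-U(x_0)|^{1-\theta}\le C|\nabla U(x)|$, combined with the energy-like Lyapunov function $v$ and the integrable perturbation terms controlled by $\int r^\alpha d\tau<\infty$, gives finite arc length $\int^\infty|x'|\,d\tau<\infty$. This is the argument of \cite{MM} verbatim once the reduced equations have the required structure. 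I expect the main obstacle to be checking this structure. After non-abelian reduction, the kinetic metric on $\Sigma$ is the quotient metric and the Routhian at $J=0$ contains no magnetic or curvature term. One must verify that the shape equation is still $x''=\nabla U$-type damping plus $O(|x'|^2)+O(r^\alpha)$ corrections. This is what the Łojasiewicz estimate requires, so a careful computation of the reduced Euler--Lagrange equations in slice coordinates is needed here.

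Finally, we reconstruct the group variable. From $g^{-1}g'=-\mathcal A(x)x'$, written in $\tau$-time since the relation is homogeneous of degree one in velocities, we get $|g^{-1}g'|\le C|x'|$, because $\mathcal A$ is bounded near $x_0$ by invertibility of $\mathbb I(x_0)$. Using the bi-invariant metric on $SO(d)$, the length of $g(\tau)$ is bounded by $C\int^\infty|x'|d\tau<\infty$. Hence $g(\tau)$ converges to some $g_\infty$, and $s=g\cdot x\to g_\infty\cdot x_0$, a fixed normalized central configuration. This excludes infinite spin.
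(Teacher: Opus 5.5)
Your proposal is correct and is essentially the paper's proof. It performs zero-momentum Lagrange--Routh reduction, made possible by invertibility of the locked inertia tensor $\mathcal{I}(s)$ on configurations of dimension $\ge d-1$ (you use an abstract local slice where the paper uses the explicit triangular normalization $q=rRs$); it then uses the McGehee blow-up, the Moeckel--Montgomery finite-arclength result for $\int\|w\|_{FS}\,d\tau$, and reconstruction of the rotation via $\|R^{-1}R'\|\le C_1\|w\|_{FS}$. The one inaccuracy is your paraphrase of the Moeckel--Montgomery step: as the paper notes, that step goes through the center manifold theorem at the partially hyperbolic rest point $(0,v_0,s_0,0)$, on which $w=k\,\widetilde{\nabla}U(s)$ to leading order, before Lojasiewicz is applied, whereas the bare ``Lyapunov function $v$ plus integrable perturbation'' estimate you describe would need an extra ingredient (for instance a cross term $\epsilon\langle\widetilde{\nabla}U,w\rangle$) to bound $|\nabla U|$ by the dissipation.
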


When $d=3, \kappa=1$, all collinear central configurations are non-degenerate and therefore it follows from the works of Chazy \cite{Chazy} and Saari \cite{Saari} that also no infinite spin happens. This gives a complete result in $d=3$.

We organize the article as follows: In Section \ref{Sec: Gen} we explain our general settings of the problem as well as the separation of rotation part and dilation part from the corresponding reduced normalized configuration. In Section \ref{Sec: Finite Spin} we show that the reduced equations of motion can be analyzed as in \cite{MM}, which lead to a simple proof of Theorem \ref{thm: main}.

\section{General Setting} \label{Sec: Gen}

\subsection{The problem of N-particles and reduction by center of mass}

We consider the $N$-body problem in $\R^{d}, d \ge 1$, in which the particles carry positive masses $m_{1}, \cdots, m_{N}$, in the center of mass frame. The configuration space
$$\mathcal{Q}:=\{q=(q_{1}, \cdots, q_{N}) \in \R^{d}: \sum m_{i} q_{i}=0, q_{i} \neq q_{j}, \forall i \neq j\}.$$
is equipped with the mass-weighted product
$$\langle q, \tilde{q} \rangle =\sum m_{i} q_{i} \cdot \tilde{q}_{i}$$
in which $\cdot $ denotes the inner product in $\R^{d}$.

For $q \in \mathcal{Q}$ we set its length as $r=\sqrt{\langle q ,q \rangle}=: \|q\| \in \R_{+}$, and its corresponding normalized configuration as 
$$\hat{q} \in \mathcal{N}:=\{\hat{q} \in \mathcal{Q}; \|\hat{q}\|=1\}.$$ 

This $N$-body system is a Lagrangian system, with the Lagrangian 
$$\mathcal{L}=\dfrac{1}{2}\langle \dot{q}, \dot{q} \rangle+U(q). $$
The $-\kappa$-homogeneous force function is assumed to take the form
$$U(q)=\sum \dfrac{m_{i} m_{j}}{\|q_{i}-q_{j}\|^{\kappa}}.$$
In particular 
$$U(q)=U(\hat{q})\, r^{-\kappa}.$$

The group $SO(d)$ acts diagonally on $\mathcal{Q}$ (and on $\mathcal{N}$) by rotating each particle around their center of mass. When $d \ge 3$, this group is non-abelian. The corresponding conserved quantities are the components of the angular momentum bivector 
$$C:=\sum m_{i} q_{i} \wedge \dot{q}_{i}.$$ 

The symmetry reduction procedure is more complicated when $d \ge 3$ as in the case $d=1$ (trivial) and $d=2$ (abelian). 

\subsection{Coordinates separating dilation, rotation and absolute shape}

We write $\{\e_{1}, \cdots, \e_{d}\}$ for a positive orthonomal basis of $\R^{d}$. 

For a non-zero vectors $\tilde{x} \in \R^{d}$ and a vector subspace $\pi \subset \R^{d}$ we denote by 
$OC(\tilde{x}, \pi)$
the orthogonal component of $\tilde{x}$ with respect to $\pi$. If $\{\tilde{\e}_{1}, \cdots, \tilde{\e}_{p}\}$ is a positive orthonormal basis of $\pi$, then 
$$OC(\tilde{x}, \pi)=\tilde{x}-\sum_{i=1}^{p} (\tilde{x} \cdot \tilde{\e}_{i}) \tilde{\e}_{i}.$$

We assume 
$$\hbox{span} \{q_{1}, \cdots, q_{N}\} \ge d-1,$$
so up to permutation of subindices we may assume
$$\hbox{span} \{q_{N-(d-2)}, \cdots q_{N}\} =d-1.$$

Under this assumption, there exists a unique rotation $R \in SO(d)$ such that 
\begin{itemize}
\item $R^{-1} q_{N} \in R_{+} \, \e_{1}$;
\item $R^{-1} \, OC(q_{N-1}, \hbox{span}\, q_{N}) \in R_{+} \, \e_{2}$;
\item $R^{-1} \, OC(q_{N-2}, \hbox{span}\, \{q_{N-1}, q_{N}\} ) \in R_{+} \, \e_{3}$; \\
$\cdots$
\item $R^{-1} \, OC(q_{N-(d-2)}, \hbox{span}\, \{q_{N-(d-1)}, q_{N}\} ) \in R_{+} \, \e_{d-1}$;
\end{itemize}

This way we write 
$$q=r R s,$$
in which $s \in \mathcal{Q}$ is such that 
$$\|s\|=1,$$ and
\begin{itemize}
\item $ s_{N}=\hat{b}_{1} \e_{1}$;
\item $OC(s_{N-1}, \e_{1}) \in \hat{b}_{2} \, \e_{2}$;
\item $OC(s_{N-2}, \hbox{span}\, \{ \e_{1}, \e_{2}\} ) \in \hat{b}_{3} \, \e_{3}$; \\
$\cdots$
\item $OC(s_{N-(d-2)}, \hbox{span}\, \{q_{N-(d-1)}, q_{N}\} ) \in \hat{b}_{d-1} \, \e_{d-1}$;
\end{itemize}
with $\hat{b}_{d-1}, \cdots \hat{b}_{1}>0$.

The velocity is then computed as 
$$\dot{q} = \dot{r} R s + r \dot{R} s + r R \dot{s} = R \left( \dot{r} s + r R^{-1} \dot{R} s + r \dot{s} \right).$$
or particle-wise
$$\dot{q}_{i} = \dot{r} R s_{i} + r \dot{R} s_{i} + r R \dot{s}_{i} = R \left( \dot{r} s_{i} + r R^{-1} \dot{R} s_{i} + r \dot{s}_{i} \right).$$

The Lagrangian is
\[
\begin{aligned}
	L &= \frac{1}{2} \langle \dot{q}, \dot{q} \rangle + U(q) \\
	&= \frac{1}{2} \dot{r}^2 + \frac{r^2}{2} \langle R^{-1} \dot{R} s+\dot{s}, R^{-1} \dot{R} s +\dot{s}\rangle  + \frac{U(s)}{r}\\
	&= \frac{1}{2} \dot{r}^2 + \frac{r^2}{2} \left[ \langle R^{-1} \dot{R} s, R^{-1} \dot{R} s \rangle + \langle \dot{s}, \dot{s} \rangle + 2 \langle R^{-1} \dot{R} s, \dot{s} \rangle \right] + \frac{U(s)}{r^{\kappa}}.
\end{aligned}
\]

For bivectors we have some useful expressions:
$$ a \wedge a=0, \qquad R a \wedge R b=R \,a \wedge b\, R^{-1}.$$
in which $R^{-1}=R^{*}$ is the adjoint of $R$. If we express $a \wedge b$ using matrix then $R^{*}$ is correspondingly represented by $R^{T}$.

The angular momentum bivector is then written in these coordinates as
\[
\begin{aligned}
C &=\sum m_{i} q_{i} \wedge \dot{q}_{i} \\
    &=\sum m_{i} (r R s_{i})  \wedge R ( \dot{r} s_{i} + r R^{-1} \dot{R} s_{i} + r \dot{s}_{i} ) \\
     &= r^{2} \sum m_{i} R \, (s_{i} \wedge R^{-1} \dot{R} s_{i} + s_{i} \wedge \dot{s}_{i} )\, R^{-1} . 
\end{aligned}
\]

The main problem we want to study here is the dynamics of a N-body system approaching a total collapse. We address the problem in the following setting:

\begin{itemize}
\item $d \ge 3$;
\item the force function of the mutual attraction of two particles in the initial system takes the form $U_{i, j}=m_{i} m_{j} \|q_{i}-q_{j}\|^{-\kappa}$ ($-\kappa-$homogeneous potential), and we suppose $0<\kappa<2$.
\end{itemize}

In this case,  \cite[Th\'eor\`eme Fondamental]{Chenciner} asserts that a total collapse is possible only if 

(a) the angular momentum bivector $C$ is zero, which is given by the relationship 

\begin{equation}\label{eq: zero-angular momentum condition}
\sum m_{i}  (s_{i} \wedge R^{-1} \dot{R} s_{i} + s_{i} \wedge \dot{s}_{i} )  =0, 
\end{equation}
and
(b) the limiting normalized configuration $s$ tends to a connected component of the set of (normalized) central configurations.

 A normalized central configuration is called isolated if it is isolated after the rotational symmetry on $\mathcal{N}$ is reduced.  The dimension of a configuration in $\R^{d}$ is the dimension of the vector subspace generated by this configuration. We shall work with the following assumption to have a unique limiting normalized configuration up to rotations:

\begin{assumption} The limiting central configuration is isolated, and is of dimension $d$ or $d-1$. 
\end{assumption}

\subsection{Non-abelian Lagrange-Routh Reduction at zero-moment level}
To proceed, we need to compute the Lagrange-Routh reduced system. This procedure has been carried out in \cite{Marsden-Ratiu-Scheurle} which we briefly sketch and apply to our particular case. 

Our symmetric group is the Lie group $G=SO(d)$, which is compact, with Lie algebra $\mathfrak{g}=so(d)$, which can be identified with its dual $so(d) \cong so(d)^{*}$ by means of a $SO(d)$-invariant metric (say the negative Killing form). An element $\xi \in \mathfrak{g}=so(d)$ can then be represented by a $d \times d$ anti-symmetric matrix 
$$(\xi_{ij})_{d \times d}, \xi_{ij}=-\xi_{ji},$$
 or equivalently by a bivector
 $$\sum_{i, j} \frac{1}{2} \xi_{{ji}} e_{i} \wedge e_{j}.$$

For $\xi \in \mathfrak{g}$, its associated fundamental vector field is denoted by $\underline{\xi}$. So $\underline{\xi}(q)$ is a vector at each $q \in \mathcal{Q}$. We write $\exp (t\xi)$ for the corresponding 1-parameter subgroup of $G$, which acts particle-wise as 
$$\exp (t \xi) \cdot q_{i} =\exp (t \xi) q_{i}$$
and thus the action of $\mathfrak{g}$ is given particle-wise as
$$\xi \cdot q_{i}=\xi q_{i}, $$
in which the right-hand side is given by matrix multiplication. 

By abuse of notation we have
$$\xi q_{i}= q_{i} \lrcorner \xi,$$
in which on the left-hand side $\xi$ represents an anti-symmetric matrix, and on the right-hand side $\xi$ represents the associated bivector. 

We may as well write the particle-wise action as
$$\xi \cdot q_{i}=q_{i} \lrcorner \xi. $$

So we have 
$$\underline{\xi}(q)=(\xi q_{1}, \cdots, \xi q_{N})=(q_{1} \lrcorner \xi, \cdots,q_{N} \lrcorner \xi) \in T_{q} \mathcal{Q}.$$

The moment map $\mu: TQ \mapsto \mathfrak{g}^{*}$ is define by the relationship 
$$\mu(v_{q}) \cdot \xi=\langle v_{q} , \underline{\xi}(q) \rangle.$$

In our case we may write $v_{q}=(q, v)$, so we obtain
$$\mu(v_{q}) \cdot \xi=\sum_{i} m_{i} \langle v_{i} , \xi q_{i} \rangle.$$
Using the identity
$$\langle v_{i} , \underline{\xi}(q_{i}) \rangle=\langle v_{i} , q_{i} \lrcorner \xi \rangle=\langle q_{i} \wedge v_{i}, \xi \rangle$$
and natural identification of $\mathfrak{g}$ with $\mathfrak{g}^{*}$ we get the angular momentum
$$\mu(v_{q})=\sum m_{i} q_{i} \wedge v_{i}= C. $$

A mechanical connection $\alpha: T\mathcal{Q} \mapsto \mathfrak{g}$ is an equivariant $\mathfrak{g}$-valued 1-form such that $\alpha(\underline{\xi}(q))=\xi$, and the horizontal distribution $\ker \alpha$ is orthogonal to the vertical distribution. 



Fix $c \in \mathfrak{g} \cong  \mathfrak{g}^{*}$. Then the corresponding reduced Routhian is given, as in \cite[III.B]{Marsden-Ratiu-Scheurle} by
$$R^{c} (v_{q})=L(v_{q})-\langle c, \alpha(v_{q}) \rangle,$$
in which $\mu(v_{q})=c$.

We are interested only in the case $c=0$, so the reduced Routhian is simply
$$R^{0} (v_{q})=\dfrac{1}{2} \|v_{q}\|^{2}-\mathcal{V}(q).$$

To write down the reduced equations of motion, we need in principle to study the contribution of the curvature of $\alpha$, defined as the $\mathfrak{g}$-valued 2-form $\beta$ such that 
$$\beta(u_{q}, v_{q})=d \alpha (Hor(u_{q}), Hor(v_{q})).$$
This has been extensively discussed in \cite[III. F, III. H]{Marsden-Ratiu-Scheurle}. In the case that interests us, we only need to observe that when $c=0$, the curvature 2-form brings no contribution to the reduced equations of motion.
 Part of the reduced equations of motion as given in \cite{Marsden-Ratiu-Scheurle}, namely (III. 37) is trivial. The only non-trivial equations, (III. 37) of \cite{Marsden-Ratiu-Scheurle} then take the form:

\[ \label{eq: reduced eq of motion}
\begin{cases}
	\dfrac{d}{dt} \dfrac{\partial R^0}{\partial \dot{r}} - \dfrac{\partial R^0}{\partial r} = 0 \\
	\dfrac{d}{dt} \dfrac{\partial R^0}{\partial \dot{s}} - \dfrac{\partial R^0}{\partial s} = 0.
\end{cases}
\]



\subsection{Precise computations of the reduced equations of motion}

We now carry out computations in our problem more precisely. 

We have set
$$q=r R s,$$
thus 
$$\dot{q} = \dot{r} R s + r \dot{R} s + r R \dot{s} = R \left( \dot{r} s + r R^{-1} \dot{R} s + r \dot{s} \right),$$ 
and thus the Lagrangian takes the form:
\[
\begin{aligned}
	L &= \frac{1}{2} \langle \dot{q}, \dot{q} \rangle + U(q) \\
	&= \frac{1}{2} \dot{r}^2 + \frac{r^2}{2} \langle R^{-1} \dot{R} s+\dot{s}, R^{-1} \dot{R} s +\dot{s}\rangle  + \frac{U(s)}{r}\\
	&= \frac{1}{2} \dot{r}^2 + \frac{r^2}{2} \left[ \langle R^{-1} \dot{R} s, R^{-1} \dot{R} s \rangle + \langle \dot{s}, \dot{s} \rangle + 2 \langle R^{-1} \dot{R} s, \dot{s} \rangle \right] + \frac{U(s)}{r}.
\end{aligned}
\]

To compute $R^{0}$, we determine the term $R^{-1} \dot{R}$ directly from the zero-angular momentum condition \eqref{eq: zero-angular momentum condition}:
$$
\sum m_{i}  (s_{i} \wedge R^{-1} \dot{R} s_{i} + s_{i} \wedge \dot{s}_{i} )  =0.
$$

This can be rewritten into the form
$$\mathcal{I}(s) \cdot R^{-1} \dot{R}+G(s, \dot{s})=0,$$
in which
$$\mathcal{I}(s) \cdot R^{-1} \dot{R}=\sum m_{i}  s_{i} \wedge R^{-1} \dot{R} s_{i},  \qquad G(s, \dot{s})=\sum m_{i}   s_{i} \wedge \dot{s}_{i}. $$

As $R^{-1} \dot{R}$ is anti-symmetric, it corresponds to a bivector 
$$\Gamma=\frac{1}{2} (R^{-1} \dot{R})_{{ji}} e_{i} \wedge e_{j}.$$
The inner product of a bivector is the natural one induced from the inner of vectors, namely 
$$\langle v_{1} \wedge v_{2}, v_{3} \wedge v_{4} \rangle=\langle v_{1}, v_{3}\rangle \langle v_{2}, v_{4}\rangle-\langle v_{1}, v_{4}\rangle \langle v_{2}, v_{3}\rangle.$$
We denote the induced norm for a bivector (and thus also for an antisymmetric matrix on $\R^{d}$) still by $\|\cdot\|$.

The product $R^{-1} \dot{R} s_{i}$ can be written in the equivalent form using contraction of a vector with a bivector as
$$R^{-1} \dot{R} s_{i}=s_{i} \lrcorner \Gamma.$$

So the moment of inertia tensor $\mathcal{I}(s)$ can be equivalently written as
$$\mathcal{I}(s) \Gamma=\sum m_{i} s_{i} \wedge (s_{i} \lrcorner \Gamma)=\sum m_{i} \Bigl(\|s\|^{2} \Gamma - s_{i} \lrcorner (s_{i} \wedge \Gamma) \Bigr). $$

\begin{lemma} $\mathcal{I}(s)$ is positive semi-definite. It is positive-definite when 
$$\hbox{span} \{s_{1}, s_{2}, \cdots, s_{N}\} \ge d-1.$$ 
\end{lemma}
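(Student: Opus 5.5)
The plan is to compute the quadratic form $\Gamma \mapsto \langle \mathcal{I}(s)\Gamma, \Gamma\rangle$ directly and recognize it as a sum of squares. By the definition of the induced inner product on bivectors, for any vectors $a,b$ we have $\langle a \wedge b, \Gamma\rangle = \langle b, a \lrcorner \Gamma\rangle$. This is the same identity $\langle v_{i}, q_{i}\lrcorner \xi\rangle=\langle q_{i}\wedge v_{i},\xi\rangle$ used above for the moment map, possibly up to a sign fixed by the convention for $\lrcorner$; we fix the sign consistently with the definition $R^{-1}\dot R s_i = s_i \lrcorner \Gamma$. Applying this with $a=s_i$ and $b=s_i\lrcorner\Gamma$ gives
$$\langle \mathcal{I}(s)\Gamma,\Gamma\rangle=\sum_i m_i \langle s_i \wedge (s_i\lrcorner\Gamma),\Gamma\rangle=\sum_i m_i \|s_i\lrcorner\Gamma\|^2=\sum_i m_i\|\Gamma s_i\|^2 \ge 0 .$$
If the convention produces an overall minus sign, then $\mathcal{I}$ is negative semi-definite with respect to that pairing. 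Since it is physically the kinetic energy $\|\underline{\Gamma}(s)\|^2$ of the rotational velocity field, the correct sign is the one making it nonnegative, and I would verify it on $\Gamma=\e_1\wedge\e_2$, $s_i=\e_1$. Symmetry of $\mathcal{I}(s)$ follows from the same identity, since $\langle \mathcal{I}(s)\Gamma,\Lambda\rangle=\sum m_i\langle \Gamma s_i,\Lambda s_i\rangle$. This proves semi-definiteness.

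For definiteness, suppose $\langle\mathcal{I}(s)\Gamma,\Gamma\rangle=0$. Then $\Gamma s_i=0$ for every $i$, so the antisymmetric matrix $\Gamma$ vanishes on $W=\hbox{span}\{s_1,\dots,s_N\}$, and we must show $\Gamma=0$ when $\dim W\ge d-1$. If $\dim W=d$, this is immediate. If $\dim W=d-1$, let $n$ be a unit normal to $W$. For $w\in W$, antisymmetry gives $\langle \Gamma n, w\rangle=-\langle n,\Gamma w\rangle=0$, so $\Gamma n\in W^\perp=\R n$. Also $\langle \Gamma n,n\rangle=0$ by antisymmetry, hence $\Gamma n=0$. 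Thus $\Gamma$ kills a basis of $\R^d$ and $\Gamma=0$.

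The only delicate point is the sign and normalization bookkeeping between the bivector formalism ($\lrcorner$, the factor $\tfrac12$ in $\Gamma$) and the matrix formalism. The cleanest route is to work entirely with matrices, $\langle A,B\rangle=\tfrac12\mathrm{tr}(A^TB)$, and the identity $\langle a\wedge b, \Gamma\rangle = a\cdot\Gamma^{T} b$ up to the chosen convention. The linear algebra itself is routine. One should also remark that when $\dim W\le d-2$, the form is not definite: any nonzero $\Gamma$ supported on $W^\perp\wedge W^\perp$ lies in the kernel. This shows the hypothesis is sharp.
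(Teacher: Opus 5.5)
Your proof is correct and follows the paper's route. Both arguments use the same sum-of-squares identity $\langle \Gamma, \mathcal{I}(s)\Gamma\rangle=\sum_i m_i\|s_i\lrcorner\Gamma\|^2$, obtained by transposing $s_i\wedge$ into $s_i\lrcorner$; with the paper's conventions the sign is indeed $+$ (e.g.\ $\e_1\lrcorner(\e_1\wedge\e_2)=\e_2$). The only difference is that you prove, via your unit-normal argument, that a nonzero antisymmetric $\Gamma$ cannot annihilate a subspace of dimension $\ge d-1$, whereas the paper simply asserts that such an annihilator has dimension at most $d-2$.
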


{\bf Proof.} We compute the quadratic form
$$\langle \Gamma, \mathcal{I}(s) \Gamma \rangle=\langle \Gamma, \sum m_{i} s_{i} \wedge (s_{i} \lrcorner \Gamma) \rangle= \sum m_{i} \langle \Gamma, s_{i} \wedge (s_{i} \lrcorner \Gamma) \rangle=\sum m_{i} \langle s_{i} \lrcorner \Gamma,  s_{i} \lrcorner \Gamma \rangle \ge 0. $$
In the last step we have used the identity to transpose ``$s_{i} \wedge$'' to ``$s_{i} \lrcorner$''. 

The degeneracy can happen only if all the $s_{i}$ lie in the annihilator space of a non-zero bivector $\Gamma$. This is not possible if 
$$\hbox{span} \{s_{1}, s_{2}, \cdots, s_{N}\} \ge d-1.$$ 
Indeed the annihilator space of a non-zero bivector has dimension at most $d-2$. 
$\square$

We therefore make the following assumption:

\begin{assumption} The limiting normalized central configuration is isolated and is not contained in any vector-subspace of $\bar{\mathcal{Q}}$ of dimension $d-2$. 
\end{assumption}

With this assumption we have $\mathcal{I}(s)>0$ in a neighborhood of such a limiting normalized central configuration. If we do not distinguish an antisymmetric matrix with its corresponding 2-vector, we may write
\begin{equation}\label{eq: RIG}
R^{-1} \dot{R} = -\mathcal{I}(s)^{-1} G(s, \dot{s}).
\end{equation}

Thus

\[
\begin{aligned}
	R^{0} & =  \frac{1}{2} \dot{r}^2 + \frac{r^2}{2} \left[ \langle R^{-1} \dot{R} s, R^{-1} \dot{R} s \rangle + \langle \dot{s}, \dot{s} \rangle + 2 \langle R^{-1} \dot{R} s, \dot{s} \rangle \right] + \frac{U(s)}{r} \\
	&= \frac{1}{2} \dot{r}^2 + \frac{r^2}{2} \| R^{-1} \dot{R} s + \dot{s} \|^{2}  + \frac{U(s)}{r} \\
	&=\frac{1}{2} \dot{r}^2 + \frac{r^2}{2} \| \mathcal{I}(s)^{-1} G(s, \dot{s}) s - \dot{s} \|^{2}  + \frac{U(s)}{r}.
\end{aligned}
\]

We see that the term 
$$F(s, \dot{s}):= \| \mathcal{I}(s)^{-1} G(s, \dot{s}) s - \dot{s} \|^{2}=\dot{s}^{T} A(s) \dot{s} $$
is a quadratic form on $\dot{s}$. We show

\begin{lemma} The quadratic form $F(s, \dot{s})$  is positive-definite.
\end{lemma}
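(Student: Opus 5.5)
Here $\dot{s}$ is restricted to the tangent space of the slice of normalized, rotation-fixed configurations (the $s$ with $\|s\|=1$ and the triangular normalization described above). Positive-definiteness will then follow from a short orthogonality argument. Write $\Gamma(\dot s):=-\mathcal{I}(s)^{-1}G(s,\dot s)$, which is linear in $\dot s$, so that
\[
F(s,\dot s)=\|\Gamma(\dot s)\, s+\dot s\|^{2}=\|\dot s-\mathcal{I}(s)^{-1}G(s,\dot s)\,s\|^{2}.
\]
Then $F$ is a nonnegative quadratic form in $\dot s$. It remains to show that $F(s,\dot s)=0$ forces $\dot s=0$.

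\textbf{Step 1: $F$ is a squared orthogonal projection.} By the definition of $\Gamma(\dot s)$, the vector $w:=\dot s+\underline{\Gamma(\dot s)}(s)$ satisfies the zero-angular-momentum relation \eqref{eq: zero-angular momentum condition}, i.e. $\mu(w)=\sum m_i s_i\wedge w_i=0$. By the moment-map identity $\mu(v)\cdot\xi=\langle v,\underline{\xi}(s)\rangle$, this means $w$ is orthogonal, in the mass metric, to the whole orbit direction $\{\underline{\xi}(s):\xi\in so(d)\}$. Since $w-\dot s$ lies in that orbit direction, $w$ is precisely the orthogonal projection of $\dot s$ onto the horizontal space $\ker\alpha$. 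Hence $F(s,\dot s)=\|P_{hor}\dot s\|^2$.

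\textbf{Step 2: transversality of the slice.} If $F=0$, then $\dot s=\underline{\xi}(s)$ for $\xi=-\Gamma(\dot s)$, so $\dot s$ is tangent to the $SO(d)$-orbit. The key point is that the slice defined by the normalization conditions ($s_N\in\R_+\e_1$, the $OC$ conditions, and so on) meets the orbits transversally. Indeed, the map $(R,s)\mapsto Rs$ is a local diffeomorphism, because the rotation $R$ in the decomposition $q=rRs$ is unique and depends smoothly on $q$ under the span assumption. Therefore no nonzero infinitesimal rotation $\underline{\xi}(s)$ is tangent to the slice. To verify this concretely, suppose $\xi s$ is tangent to the slice. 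Then $\xi s_N\in\R\e_1$ with $s_N\parallel\e_1$, which forces $\xi_{k1}=0$ for all $k$. The next condition kills $\xi_{k2}$ for $k\ge3$, and so on up to column $d-1$. Since $\xi$ is antisymmetric, $\xi=0$, so $\dot s=0$.

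\textbf{Expected obstacle.} The delicate point is Step 2. One must make sure the slice really is a slice, in the sense that its tangent space intersects the orbit tangent space trivially. This needs $\mathrm{span}\ge d-1$, which is also the condition making $\mathcal{I}(s)$ invertible by the preceding Lemma. I would carry out the column-by-column elimination of $\xi$ carefully, using that the $\hat b_k>0$ and that the entries $\xi_{kj}$ with $k\le j$ are determined by antisymmetry. Positive-definiteness on the compact neighborhood then gives a uniform lower bound $F\ge c\|\dot s\|^2$, which is what the later Lojasiewicz argument uses.
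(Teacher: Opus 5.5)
Your proof is correct and follows the paper's route. In both, $F=0$ forces $\dot s_i=\xi s_i$ for an antisymmetric $\xi$, and the triangular slice normalization with $\hat b_k>0$ then kills $\xi$ column by column, so $\dot s=0$; your horizontal-projection reading of $F$ and your explicit restriction of $\dot s$ to the tangent space of the slice (without which the lemma would fail) usefully make explicit what the paper leaves implicit, but they do not change the argument.
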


{\bf Proof.} 
This quadratic form is clearly non-negative. It is 0 only if 
$$ \mathcal{I}(s)^{-1} G(s, \dot{s}) s - \dot{s}=0.$$
Let $\Omega=\mathcal{I}(s)^{-1} G(s, \dot{s})(=-R^{-1} \dot{R})$, seen as an antisymmetric matrix. We have
$$ \Omega \,s_{i} - \dot{s}_{i}=0,\qquad \forall i=1, \cdots, N.$$
We have set 
$$s_{N}=\hat{b}_{1} \e_{1},  \quad s_{N-1}=\hat{c}_{21} \e_{1} + \hat{b}_{2} \e_{2}, \cdots, s_{N-d+1}=\hat{c}_{d-1, 1} \e_{1} + \cdots + \hat{b}_{d-1} \e_{d-1},$$
within which $\hat{b}_{1} > 0, \cdots, \hat{b}_{d=1} > 0$.

From the identity on $s_{N}$: 
$$ \Omega \,s_{N} - \dot{s}_{N}=0,$$
 we get
$$\hat{b}_{1} \Omega \e_{1}-\dot{\hat{b}}_{1} \e_{1}=0.$$
Since $\Omega$ is antisymmetric, with the Euclidean inner product in $\R^{d}$ we have $\langle \e_{1}, \Omega \e_{1}\rangle=0$. As $\hat{b}_{1} > 0$ we obtain
$$\Omega \e_{1}=0, \dot{\hat{b}}_{1}=0.$$

Next we consider the identity on $s_{N-1}$:
$$ \Omega \,s_{N-1} - \dot{s}_{N-1}=0,$$
which gives
$$\hat{c}_{21} \Omega \e_{1} + \hat{b}_{2} \Omega \e_{2}=\dot{\hat{c}}_{21} \e_{1} + \dot{\hat{b}}_{2} \e_{2}.$$
From the discussion above, $\Omega \e_{1}=0$, so we have
$$ \hat{b}_{2} \Omega \e_{2}=\dot{\hat{c}}_{21} \e_{1} + \dot{\hat{b}}_{2} \e_{2}.$$
Since $\e_{2}$ is orthogonal to both $\e_{1}$ and $\Omega \e_{2}$, we get 
$$ \dot{\hat{b}}_{2}=0, $$
thus
$$\Omega \e_{2}=k_{21} \e_{1}$$
for some $k_{21} \in \R$.

Consider the identity on $s_{N-2}$:
$$ \Omega \,s_{N-1} - \dot{s}_{N-1}=0.$$
We get
$$\hat{c}_{31} \Omega \e_{1}  + \hat{c}_{32} \Omega \e_{2}+ \hat{b}_{3} \Omega \e_{3}=\dot{\hat{c}}_{31} \e_{1} +\dot{\hat{c}}_{32} \e_{2} + \dot{\hat{b}}_{3} \e_{3}.$$
From the discussion above we have $ \Omega \e_{1} =0,  \Omega \e_{2}=k_{21} \e_{1}$, so we obtain
$$ \hat{c}_{32} k_{21} \e_{1}+ \hat{b}_{3} \Omega \e_{3}=\dot{\hat{c}}_{31} \e_{1} +\dot{\hat{c}}_{32} \e_{2} + \dot{\hat{b}}_{3} \e_{3}.$$
As $\e_{3}$ is orthogonal to $\e_{1}, \e_{2}, \Omega \e_{3}$, we obtain 
$$\dot{\hat{b}}_{3}=0$$
and thus 
$$\Omega \e_{3}=k_{31} \e_{1}+ k_{32} \e_{2}.$$

Continuing in this way, we see that in the basis $\{\e_{1}, \cdots, \e_{d}\}$ the antisymmetric matrix $\Omega$ has a zero (upper or lower) triangular part. Thus $\Omega$ is the zero matrix. This means
$G(s, \dot{s})$ is zero, thus $\dot{s}=0$.
$\square$

This way we have defined a norm on tangent vectors $\dot{s}$ at base point $s$ by 
$$\|\dot{s}\|_{FS}:=\sqrt{F(s, \dot{s})}.$$
The subscript ``FS'' is adopted from \cite{MM}, in which the norm corresponds to the Fubini-Study metric on $\mathbb{C} \mathbb{P}(N-2)$. 

We now write the reduced Routhian as
$$R^{0}  =\frac{1}{2} \dot{r}^2 + \frac{r^2}{2} F(s, \dot{s})  + \frac{U(s)}{r^{\kappa}}.$$

The reduced equations of motion 
\[
\begin{cases}
	\dfrac{d}{dt} \dfrac{\partial R^0}{\partial \dot{r}} - \dfrac{\partial R^0}{\partial r} = 0, \\
	\dfrac{d}{dt} \dfrac{\partial R^0}{\partial \dot{s}} - \dfrac{\partial R^0}{\partial s} = 0,
\end{cases}
\]
are thus
\[
\begin{cases}
	\ddot{r} = r \dot{s}^T A(s) \dot{s} -  \dfrac{ \kappa}{r^{\kappa+1}} U(s), \\
	\ddot{s} = \dfrac{1}{r^{\kappa+2}} A^{-1}(s) \nabla U(s) - 2 \dfrac{\dot{r} \dot{s}}{r} + \dfrac{1}{2} A^{-1}(s) \nabla (\dot{s}^{T} A(s) \dot{s})  - A^{-1}(s) \dot A(s) \dot{s}.
\end{cases}
\]

\section{The Reduced Equations of Motion and Finite Spin}\label{Sec: Finite Spin}
\subsection{Analysis of the reduced equations of motion}

To further analyze the dynamical behavior near a total collapse, we make a time-change by setting $dt = r^{\frac{\kappa}{2}+1} d\tau$. The equations of motion then take the form
\[
\begin{cases}
	r'' = (\dfrac{\kappa}{2}+1)\dfrac{ r'^2}{r} + r s'^T A(s) s' - \kappa\, r U(s), \\
	s'' = A^{-1}(s) \nabla U(s) + (\frac{\kappa}{2}-1)\dfrac{r' s'}{r} + \frac{1}{2} A^{-1}(s) \nabla (\dot{s}^{T} A(s) \dot{s})-  A^{-1}(s) \left( D A(s) \left( s' \right)\right) s'.
\end{cases}
\]

Further we set $r' = v r$, $s' = w$ to blow-up the set of total collisions $\{r=0\}$ \emph{\`a la }McGehee. This way we get

\[
\begin{cases}
	r' = v r, \\
	v' = \dfrac{\kappa}{2} v^2 + w^{T} A(s) w - \kappa U(s), \\
	s' = w, \\
	w' = A^{-1}(s) \nabla U(s) +(\frac{\kappa}{2}-1)v w + \dfrac{1}{2} A^{-1}(s) \nabla (w^{T} A(s) w) -  A^{-1}(s) \left( D A(s) \left( w \right)\right) w.
\end{cases}
\]

Setting 
\[
\widetilde{\nabla} = A^{-1}(s) \nabla,
\]
and
\[
\widetilde{D}_\tau w = w' - \frac{1}{2} \widetilde{\nabla} (w^{T} A(s) w) + A^{-1}(s) \left( D A(s) \left( w \right)\right) w,
\]
the system is now written in a more compact form as
\[
\begin{cases}
	r' = v r, \\
	v' = \dfrac{\kappa}{2} v^2 + w^{T} A(s) w - \kappa U(s), \\
	s' = w, \\
	\widetilde{D}_\tau w = \widetilde{\nabla} U(s) +(\frac{\kappa}{2}-1)v w .
\end{cases}
\]

The energy equation is 
$$\dfrac{1}{2} v^{2} + \dfrac{1}{2} F(s, w) -U(s)=r^{\kappa} h. $$

The collision manifold $\{r=0\}$ is invariant. Any equilibrium point on it can be expressed in these coordinates as $(0, v_{0}, s_{0}, 0)$, where from the energy equation $v_{0}^{2}=2 U(s_{0})$ and 
$$\widetilde{\nabla} U(s_{0})=0 \Leftrightarrow \nabla U(s_{0})=0 \Leftrightarrow \hbox{$s_{0}$ is a (reduced) normalized central configuration}.$$

Now let $q(t)$ be a total collision solution, with total collision happens at $t=t^{*}$, with normalized configuration converging to an isolated  normalized central configuration $s_{0}$. Then the corresponding solution $\gamma(\tau)=(r(\tau), v(\tau), s(t), w(\tau))$ converges as $\tau \to \infty$ to an isolated  equilibrium point $P(0, v_{0}, s_{0}, 0)$ with  
$$v(\tau) \to v_{0}=-\sqrt{2 U(s_{0})}<0.$$

From $r'=v r$ we see that $r(\tau)$ converges to 0 exponentially.

We want to show the arclength 
$$L(s)=\int \|s'(\tau)\|_{FS} \, d \tau=\int \|w(\tau)\|_{FS} \, d \tau. $$
is finite.

\begin{prop}\label{prop: MM}
	\begin{equation}\label{eq: arclength}
	L(s) = \int \|w(\tau)\|_{FS}\, d \tau
	\end{equation}
	is finite for a total collision solution.
\end{prop}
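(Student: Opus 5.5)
I will follow the Moeckel--Montgomery strategy: build a Lyapunov-type function that decreases along the flow near the isolated equilibrium $P=(0,v_0,s_0,0)$, and bound the decrease from below using a Łojasiewicz inequality for $U$ at $s_0$. The natural candidate is $v$ itself. Eliminating $U$ with the energy equation $\tfrac12 v^2+\tfrac12 F(s,w)-U(s)=r^\kappa h$ gives
\[
v'=\frac{\kappa}{2}v^2+F(s,w)-\kappa U(s)=\Bigl(1-\frac{\kappa}{2}\Bigr)F(s,w)-\kappa r^\kappa h .
\]
So $v$ is monotone up to the term $\kappa r^\kappa h$. Since $r(\tau)\to0$ exponentially, that term is integrable and contributes only a finite amount. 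Because $0<\kappa<2$, the coefficient $1-\kappa/2$ is positive, and we obtain $\int \|w\|_{FS}^2\,d\tau<\infty$. This alone does not give finite arclength, so the argument has to be refined.

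The refinement uses $U$ on the shape space, equipped with the metric $A(s)$; by the positive-definiteness lemma for $F$, $\|\cdot\|_{FS}$ is a genuine Riemannian norm near $s_0$. The function $U$ is real analytic near $s_0$. The reduced space is locally a smooth quotient, because the assumed dimension of $s_0$ is $d$ or $d-1$ and $\mathcal I(s)>0$, so the stabilizer is trivial. Moreover $s_0$ is an isolated critical point. The Łojasiewicz gradient inequality then gives
\[
|U(s)-U(s_0)|^{1-\theta}\le C\,\|\widetilde\nabla U(s)\|_{FS}, \qquad \theta\in(0,\tfrac12],
\]
in a neighbourhood of $s_0$. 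Following \cite{MM}, I would introduce a modified function
\[
\Phi=v+\sqrt{2U(s)}+\epsilon\,\langle w,\widetilde\nabla U(s)\rangle_{A},
\]
or a variant of it, possibly with $\sqrt{2U}$ replaced by a quantity adapted to the energy relation. One checks that $\Phi\to v_0+\sqrt{2U(s_0)}=0$ along the orbit. Differentiating with the compact equation $\widetilde D_\tau w=\widetilde\nabla U+(\frac\kappa2-1)vw$ should give, near $P$,
\[
\Phi'\ge c\bigl(\|w\|_{FS}^2+\|\widetilde\nabla U\|_{FS}^2\bigr)-O(r^\kappa).
\]
Here $v\approx v_0<0$ makes the damping term $(\frac\kappa2-1)v\|w\|^2$ positive. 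The cross term $\epsilon\|\widetilde\nabla U\|^2$ supplies the gradient contribution, and the Christoffel-type terms in $\widetilde D_\tau$ are quadratic in $w$, so they are absorbed for small $\epsilon$.

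The remaining step is to compare $\Phi$ with $\|w\|_{FS}+\|\widetilde\nabla U\|_{FS}$. Using the energy relation, $v+\sqrt{2U}$ is of order $F+r^\kappa$, and Łojasiewicz controls $|U-U(s_0)|$ by the gradient. Together these should give $|\Phi|^{1-\theta}\lesssim \|w\|_{FS}+\|\widetilde\nabla U\|_{FS}+O(r^{\kappa(1-\theta)})$. Then
\[
-\frac{d}{d\tau}|\Phi|^{\theta}\gtrsim \|w\|_{FS}-(\text{exponentially small terms}),
\]
and integrating yields $\int\|w\|_{FS}\,d\tau<\infty$. The exponential decay of $r$ makes all error terms integrable.

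I expect the main obstacle to be this final inequality relating $\Phi$ to the Łojasiewicz quantity. The sign of $\Phi$ must be controlled, since $\Phi$ might not stay on one side of $0$. One also has to handle the perturbations $r^\kappa h$ carefully, because they are not multiplied by $\|w\|$. My first attempt would be to add a term $K r^{\kappa\theta'}$ to $\Phi$ so that it becomes strictly monotone, and then argue as in \cite{MM}. A further technical point is that $s$ lives in a quotient parametrized by the coordinates $\hat b_i,\hat c_{ij}$. I would verify that analyticity of $U$ and smoothness of $A$ hold in these coordinates near $s_0$; this follows from $\hat b_i>0$ there.
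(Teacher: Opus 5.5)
\textbf{There is a genuine gap: the key monotonicity estimate is false for the function you chose.}

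Your route differs from the paper's. The paper linearizes at $P$ and handles the hyperbolic case with the stable manifold. In the degenerate case it passes to a center manifold and imports Lemmas 4.1--4.5 of \cite{MM}. You instead want a Lojasiewicz argument directly on the second-order $(s,w)$ system. That can work, but the bound $\Phi'\ge c(\|w\|_{FS}^2+\|\widetilde\nabla U\|_{FS}^2)-O(r^\kappa)$ fails because of the term $\sqrt{2U(s)}$.

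Its derivative is $\langle w,\widetilde\nabla U\rangle_A/\sqrt{2U(s)}\approx\langle w,\widetilde\nabla U\rangle_A/|v_0|$. This is a cross term with an order-one coefficient, and it cannot be absorbed. Write $a=1-\frac{\kappa}{2}$, $b=|v_0|$, $g=\widetilde\nabla U(s)$. The quadratic part of $\Phi'$ is
\[
a\|w\|_{FS}^2+(b^{-1}+\epsilon ab)\langle w,g\rangle_A+\epsilon\|g\|_{FS}^2+O(\epsilon\|w\|_{FS}^2).
\]
Since $(b^{-1}+\epsilon ab)^2-4a\epsilon=(b^{-1}-\epsilon ab)^2\ge0$, the part without the Hessian term is never positive definite, whatever $\epsilon$ is.

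In fact that part vanishes identically along $w=-g/(ab)$, i.e.\ $(\frac{\kappa}{2}-1)v_0\,w=-\widetilde\nabla U$. This is exactly the slow (center-manifold) direction along which the orbit approaches a degenerate $s_0$, which is the case carrying all the difficulty. There the sign of $\Phi'$ is decided by the Hessian of $U$, which you do not control.

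So the real obstacle is the monotonicity step, not the comparison step you flagged. The term $\sqrt{2U(s)}$ buys $\Phi=O(F+r^\kappa)$ at the cost of monotonicity. (Also, the correct formula is $v'=(1-\frac{\kappa}{2})F+\kappa r^\kappa h$, with a plus sign; this slip is harmless.)

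\textbf{How to repair it.} Keep a genuinely monotone quantity and let Lojasiewicz handle the comparison. The velocity $v$ enters the $s$-equation only through the coefficient $\gamma=(\frac{\kappa}{2}-1)v\to(1-\frac{\kappa}{2})|v_0|>0$. Set $H=U(s)-U(s_0)-\frac12F(s,w)$. By the energy relation this equals $\frac12(v^2-v_0^2)-r^\kappa h$, so it is essentially your first function $v$.

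Since $\widetilde D_\tau$ is the Levi-Civita derivative of $A$, you get $H'=-\gamma F$ exactly, with no $r^\kappa$ error. Now take $H_\epsilon=H-\epsilon\langle w,\widetilde\nabla U\rangle_A$. Then $H_\epsilon'=-\gamma F-\epsilon(\|g\|_{FS}^2+\gamma\langle w,g\rangle_A+O(\|w\|_{FS}^2))\le-c(\|w\|_{FS}^2+\|g\|_{FS}^2)$ for small $\epsilon>0$.

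Because $H_\epsilon$ decreases to $0$, it is nonnegative, which removes your sign worry. It also satisfies $H_\epsilon\le|U(s)-U(s_0)|+\epsilon\|w\|_{FS}\|g\|_{FS}$. Lojasiewicz together with $1-\theta\ge\frac12$ gives $H_\epsilon^{1-\theta}\lesssim\|w\|_{FS}+\|g\|_{FS}$. Hence $-\frac{d}{d\tau}H_\epsilon^{\theta}\gtrsim\|w\|_{FS}$, and integrating gives $L(s)<\infty$.

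With this correction your proof is complete. Compared with the paper's, it avoids both the spectral analysis of $B$ and the center manifold theorem. It uses only analyticity of $U$ on the slice and $\gamma>0$, i.e.\ $0<\kappa<2$ and $v_0<0$.
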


The proof is really that of Moeckel-Montgomery \cite{MM}, who established this result in the planar N-body problem with Newtonian potential. In the following sketch we explain why this holds in our case and indicate some rather minor differences. 

{\bf Sketch of Proof. } As compare to the argument of \cite{MM}, the dimension is increased and we need to carefully put coefficients depending on $\kappa$ in various places. In the $(\delta r, \delta v)$ directions the block is now $\begin{pmatrix} v_{0} & 0 \\ 0 & \kappa v_{0} \end{pmatrix}$ giving the rate of exponential convergence of $r(\gamma) \to 0$, complemented by the block  
$$B=\begin{pmatrix} 0 & Id \\ D \widetilde{\nabla} U(s_{0}) & (\kappa/2-1) v_{0} Id  \end{pmatrix}$$ 
in the $(\delta s, \delta w)$ directions. 

We look for eigenvalues and eigenvectors of the matrix $B$. If $(\delta s, \delta w)$ is an eigenvector with eigenvalue $\lambda$, then we get 
$$\delta w=\lambda \delta s, \qquad D \widetilde{\nabla} U(s_{0}) \delta s+(\kappa/2-1) v_{0} \delta w=\lambda \delta w $$
which is reduced to the system
$$\delta w= \lambda \delta s, \qquad D \widetilde{\nabla} U(s_{0}) \delta s=(\lambda-(\kappa/2-1) v_{0}) \lambda \delta s=:c\, \delta s.$$ 
So $\delta s$ is an eigenvector of the central configuration $s_{0}$ with eigenvalue $c$, {just with the gradient $\nabla$ replaced by $\widetilde{\nabla}=A^{-1}(s) \nabla$.}

 Solving $\lambda$ from $c$ we get
$$\lambda_{\pm}=\dfrac{(\kappa-2) v_{0} \pm \sqrt{(\kappa-2)^{2} v_{0}^{2} +16 c }}{4}.$$

Since $0<\kappa<2, v_{0}<0$, nonreal eigenvalues have positive real parts, corresponding to unstable directions. Also, we have $\lambda_{-}=0$ if and only if $c=0$.

So if the matrix $D \nabla U(s_{0})$ is non-singular, the corresponding point $P$ is hyperbolic and any solution approaching $P$ lies in its stable manifold, thus with exponential rate of convergence. Thus the integrand in \eqref{eq: arclength} is finite.

In the case when $D \nabla U(s_{0})$ is singular, we may proceed as in \cite{MM} using center manifold theorem \cite{CenterManifold}. Besides the change of dimension which does not affect anywhere in the argument, the precise formulas slightly differ from those in  \cite{MM} by coefficients depending on $\kappa$, which thanks to our assumption $0 < \kappa < 2$ do not change the sign nor the order estimates of the formulas.  Consequently, \cite[Lemma 4.1]{MM} still holds. The argument leading to \cite[Lemma 4.2]{MM} is completely formal and works also in our situation. Also \cite[Lemma 4.3]{MM} holds by the very same argument as in \cite{MM}, just now we need to write $k=-\frac{2}{\kappa-2} v(0)^{-1}$. Nothing changes for the arguments leading to \cite[Lemma 4.4]{MM} and to  \cite[Lemma 4.5]{MM}. Consequently \cite[Theorem 4.6]{MM} holds in our situation. This is the statement of the Proposition.
$\square$

\subsection{Finite spin}

Let $s_{0}$ be an isolated normalized reduced central configuration. It then represents  an $SO(d)$-invariant family $Orb(s_{0})$ of normalized central configurations  in $\mathcal{N}$. Then there exists an $SO(d)$-invariant neighborhood $\mathcal{U}(s_{0})$ of $Orb(s_{0})$ in  $\mathcal{N}$ such that along any normalized total collision orbit with limiting normalized reduced central configuration $s_{0}$ and with total collision time $t^{*}$, there exists $t_{-}$, such that the solution in the time interval $(t_{-}, t^{*})$ lies entirely in  $\mathcal{U}(s_{0})$.

\begin{lemma} \label{lem: estimate}
In the time interval $ (t_{-}, t^{*})$, then there exists a positive number $C_{1}>0$ depending only on $\mathcal{U}(s_{0})$, such that
	\[
	\| R^{-1} R' \| \leq C_{1} \|w\|_{FS}. 
	\]
\end{lemma}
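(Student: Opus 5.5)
The estimate follows from the reconstruction formula \eqref{eq: RIG} combined with a compactness argument. In the rescaled time $\tau$ (with $dt = r^{\kappa/2+1}d\tau$) the zero-angular-momentum condition \eqref{eq: zero-angular momentum condition} is linear in velocities, so it survives the time change unchanged:
$$R^{-1}R' = -\mathcal{I}(s)^{-1} G(s, s') = -\mathcal{I}(s)^{-1} G(s,w).$$
For each fixed $s$ in a neighborhood where $\mathcal{I}(s)>0$, the map $w \mapsto \mathcal{I}(s)^{-1}G(s,w)$ is linear, as is $w\mapsto G(s,w)=\sum m_i s_i\wedge w_i$. Hence
$$\|R^{-1}R'\| \le \|\mathcal{I}(s)^{-1}\|_{op}\,\|G(s,\cdot)\|_{op}\,|w|,$$
where $|w|$ is the mass-weighted Euclidean norm of $w$.

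It remains to compare $|w|$ with $\|w\|_{FS}=\sqrt{w^T A(s) w}$. By the lemma above, $A(s)$ is positive definite at every admissible $s$, and it depends continuously on $s$ wherever $\mathcal{I}(s)$ is invertible. Its smallest eigenvalue $\lambda_{\min}(A(s))$ is therefore a positive continuous function of $s$, and $|w|\le \lambda_{\min}(A(s))^{-1/2}\|w\|_{FS}$. Combining the two bounds gives
$$\|R^{-1}R'\|\le C(s)\,\|w\|_{FS}, \qquad C(s)=\|\mathcal{I}(s)^{-1}\|\,\|G(s,\cdot)\|\,\lambda_{\min}(A(s))^{-1/2}.$$
The function $C(s)$ is continuous.

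To make $C_1$ uniform, I would use $SO(d)$-invariance. The quantities $\mathcal{I}$, $G$ and $F$ are all equivariant under simultaneous rotation of $s$ and $w$, and the norms involved are invariant, so $C(s)$ depends only on the orbit of $s$. Shrinking $\mathcal{U}(s_0)$ if necessary, its closure is an invariant neighborhood of $Orb(s_0)$ whose image in the quotient $\mathcal{N}/SO(d)$ is compact (since $\mathcal{N}\cap\bar{\mathcal{U}}$ is closed and bounded and avoids collisions). On this set the dimension is at least $d-1$ and $\mathcal{I}>0$, so $C(s)$ is bounded by some $C_1$. For $t\in(t_-,t^*)$ the solution stays in $\mathcal{U}(s_0)$, which gives the claim.

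The main obstacle is the positivity of $A(s)$ near $s_0$. The proof of the previous lemma relies on the normal form with $\hat b_1,\dots,\hat b_{d-1}>0$, which requires the chosen $d-1$ particles to span a $(d-1)$-dimensional space. This holds near $s_0$ by openness of the rank condition, after relabelling particles so that the rank condition holds at $s_0$ itself. I also have to make sure that $\mathcal{U}(s_0)$ is chosen small enough that the coordinates $s$ and $R$ are defined throughout, so that continuity of $C(s)$ is legitimate.
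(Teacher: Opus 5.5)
Your proposal is correct and follows essentially the paper's argument: the reconstruction formula \eqref{eq: RIG} (which survives the time change), lower bounds on the smallest eigenvalues of $\mathcal{I}(s)$ and $A(s)$ near $s_0$, and a uniform constant obtained by shrinking $\mathcal{U}(s_0)$. The differences are cosmetic. The paper bounds $\|G(s,w)\|\le\|w\|$ explicitly via Cauchy--Schwarz and $\|s\|=1$ rather than by an operator norm, and it uses continuity of these eigenvalues at $s_0$ rather than your compactness-modulo-$SO(d)$ argument.
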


{\bf Proof.}
Treating $\mathcal{I}(s), A(s)$ as positive-definite symmetric operators we have at any isolated normalized central configuration $s_{0}$ with dimension $\ge d-1$, there hold
$$2 c_{1}:=\lambda_{\mathcal{I}}(s_{0})=\min_{\|\Gamma\|=1} \langle \Gamma, \mathcal{I}(s_{0}) \Gamma \rangle>0
 \qquad 2 c_{2}:=\lambda_{A} (s_{0})=\min_{\|w\|=1} \langle w, A(s_{0}) w\rangle>0
.$$
Shrinking $\mathcal{U}(s_{0})$ if necessary we may assume for all $s \in \mathcal{U}(s_{0})$ we have
$$\lambda_{\mathcal{I}}(s)=\min_{\|\Gamma\|=1} \langle \Gamma, \mathcal{I}(s) \Gamma \rangle > c_{1}>0
 \qquad \lambda_{A}(s)=\lambda_{A} (s)=\min_{\|w\|=1} \langle w, A(s_{0}) w\rangle>c_{2}>0
.$$
Changing time in Eq. \eqref{eq: RIG}, we have
$$R^{-1} R'=\mathcal{I}(s)^{-1} G(s, s')=\mathcal{I}(s)^{-1} G(s, w).$$
Then
$$\|R^{-1} R'\| < c_{1}^{-1} \|\mathcal{I} R^{-1} R' \|=c_{1}^{-1} \|G(s, w)\|.$$

Now 
$$\|G(s, w)\| \le \sum_{i} m_{i} \|s_{i}\| \|w_{i}\| \le \sqrt{\sum_{i} m_{i} \|s_{i}\|^{2}}  \sqrt{\sum_{i} m_{i} \|w_{i}\|^{2}}.$$
As we are on $\mathcal{N}$, we have
$$ \sqrt{\sum_{i} m_{i} \|s_{i}\|^{2}} =\|s\|=1,$$
so we get
$$\|G(s, w)\| \le \|w\| \le c_{3}(s) \|w\|_{FS} \le c_{4}  \|w\|_{FS},$$
in which the second inequality is based on equivalence of norms on finite-dimensional vector spaces. Moreover $c_{3}(s)$ can be controlled by $c_{2}$ and the latter continuous. Further shrinking $\mathcal{U}(s_{0})$ if necessary, we get the last inequality. Combining all these we get the assertion.
$\square$

\medskip

With this we prove Theorem \ref{thm: main}:
\medskip

{\bf Proof of Theorem \ref{thm: main}}

By equivalence of norms on a finite-dimensional vector space, there exists a constant $C_{2}>0$ such that 
$$\| R R^{-1} R'  \| \leq C_{2} \| R \| \| R^{-1} R' \|$$
holds.

By Lemma \eqref{lem: estimate}
\[
\| R' \| =  \| R R^{-1} R'  \| \leq C_{2} \| R \| \| R^{-1} R' \| \leq C_{2} C_{1}\,  \|w\|_{FS}. 
\]

The theorem now follows from Proposition \ref{prop: MM}.
$\square$



\medskip
\medskip

\noindent
Xiang Yu, \\
Center for Applied Mathematics, Tianjin University, PR China.\\
Email: xiang.zhiy@foxmail.com;\\

\noindent
Lei Zhao, \\
School of Mathematical Sciences, Dalian University of Technology, PR China.\\
Email: zhao1899@dlut.edu.cn.

\end{document}